\def\BibTeX{{\rm B\kern-.05em{\sc i\kern-.025em b}\kern-.08em
    T\kern-.1667em\lower.7ex\hbox{E}\kern-.125emX}}
\numberwithin{equation}{section}
\newtheorem{Theorem}{Theorem}[section]
\newtheorem{Corollary}[Theorem]{Corollary}
\newtheorem{Lemma}[Theorem]{Lemma}
\newtheorem{Definition}[Theorem]{Definition}
\begin{document}

\title{A topological proof of Sklar's theorem in arbitrary dimensions}

\author{Fred Espen Benth}
\address[Fred Espen Benth]{Department of Mathematics\\ University of Oslo, Norway}
\email[Fred Espen Benth]{fredb@math.uio.no}

\author{Giulia Di Nunno}
\address[Giulia Di Nunno]{Department of Mathematics\\ University of Oslo, Norway}
\email[Giulia Di Nunno]{giulian@math.uio.no}

\author{Dennis Schroers}
\address[Dennis Schroers]{Department of Mathematics\\ University of Oslo, Norway}
\email[Dennis Schroers]{dennissc@math.uio.no}

\date{\today}

\subjclass[2010]{Primary 60E05, 62H05; Secondary 62H20, 28C20}

\keywords{Copulas, Sklar's theorem, Topological inverse limits, Infinite Dimensions, Compactness}

\maketitle

\begin{abstract}
We prove Sklar's theorem in infinite dimensions via a topological argument and the notion of inverse systems. 
\end{abstract}
\section{Introduction}
Copulas are widely used and well known concepts in the realm of statistics and probability theory. The keystone of the theory is Sklar's theorem and there is a vast literature solely focussing on different proofs of this fundamental result.
Among others there are proofs based on the distributional transform in \cite{Ruschendorf2009} and \cite{Deheuvels2009} and earlier already in \cite{Moore1975}, based on mollifiers in \cite{MR2847456} or the constructive approach by the extension of subcopulas, as it was proved for the bivariate case in \cite{Schweizer1974} and for the general multivariate case in \cite{Sklar1996} or \cite{Carley2002}. 

The naive transfer of the subcopula-approach to an infinite-dimensional setting appears to be challenging, since, after the extension of the subcopulas corresponding to the finite-dimensional laws of an infinite-dimensional distribution, one would also have to check that this construction meets the necessary consistency conditions.
In contrast, and besides the approach via distributional transforms (as extended to an infinite dimensional setting in \cite{Benth2020}), a nonconstructive proof based on topological arguments in
 \cite{Durante2013} is naturally in tune with an infinite dimensional setting.
 
In this paper, we will therefore adopt this ansatz and prove Sklar's theorem in infinite dimensions by equipping the space of copulas with an inverse-limit topology that makes it compact and the operation between marginals and copulas induced by Sklar's theorem continuous. 
The compactness of copulas is described as "folklore" in \cite{MR2847456} for the finite dimensional case, which is why the transfer to arbitrary dimensions is desirable.

\section{Short Primer on Topological Inverse Systems}
We will frequently use the notation $\bar{\mathbb R}$ for the extended real line $[-\infty,\infty]$. 
 For any measure $\mu$ on a measurable space $(B,\mathcal B)$ and a measurable function $f:(B,\mathcal B)\to (A,\mathcal A)$ into another measurable space $(A,\mathcal A)$ we denote by $f_*\mu$ the pushforward measure 
 with respect to $f$ given by $f_*\mu(S):=\mu( f^{-1}(S))$ for all $S\in \mathcal A$. 
 For $I$ an arbitrary index set, $B=\bar{\mathbb{R}}^I$ and $\mathcal B=\otimes_{i\in I} \mathcal{B}(\bar{\mathbb R})$, we use the shorter notations $\pi_{J*}\mu=:\mu_J$ for a subset $J\subseteq I$ and $\pi_{\lbrace i\rbrace*}\mu=:\mu_i$ for an element $i\in I$, where $\pi_J$ denotes the canonical projection on $\mathbb{R}^J$.  If $J\subset I$ is finite, we denote the corresponding finite dimensional cumulative distribution functions by $F_{\mu_J}$ or $F_{\mu_i}$ respectively, where in the latter we used $J=\{i\}$. We use the notation $\mathcal I$ for the set consisting of all finite subsets of $I$. 
 Moreover, for a one-dimensional Borel measure $\mu_i$ on $\mathbb R$, we use the notation $F_{\mu_i}^{[-1]}$ for the quantile functions
\begin{equation}\label{Inverse Transform}
    F_{\mu_i}^{[-1]}(u) := \inf \left\lbrace x\in(-\infty,\infty) : F_{\mu_i}(x)\geq u\right\rbrace.
\end{equation}
 We will refer to the one dimensional distributions $\mu_i,i\in I$ and equivalently $F_{\mu_i},i\in I$ as marginals of the measure $\mu$. We denote the set of all probability measures  on $(\bar{\mathbb{R}}^I,\otimes_{i\in I}\mathcal{B}(\bar{\mathbb{R}}))$ by $\mathcal P(\bar{\mathbb{R}}^I)$.
Moreover, for two topological spaces $X,Y$ we write $X\cong Y$ if they are homeomorphic.

The remainder of the section is mainly based on \cite{MR2599132}. 
Let $X_{J}$ be a set for each $J\in \mathcal{I}$ and 
\begin{equation*}
    (P_{J_1,J_2}:X_{J_2}\to X_{J_1})\qquad \text{for } J_1\subseteq J_2,\text{ with } J_1,J_2\in\mathcal{I}
\end{equation*}
 a family of mappings, also called projections, such that 
\begin{itemize}
\item[(i)]$P_{J,J}=id_J$ is the identity mapping for all $J\in \mathcal{I}$, and
\item[(ii)] $P_{J_1,J_3}=P_{J_1,J_2}\circ P_{J_2,J_3}$ for all $J_1\subseteq J_2\subseteq J_3$ in $\mathcal{I}$.
\end{itemize} 
The system $$(X_J,P_{J_1,J_2},\mathcal{I}):=\left((X_J)_{J\in \mathcal{I}},((P_{J_1,J_2}:X_{J_2}\to X_{J_1})_{\overset{J_1\subseteq J_2}{J_1,J_2\in\mathcal{I}}})\right)
$$
is called an \textit{inverse system} (over the partially ordered set $\mathcal{I}$). 
 If $(X_{J},\tau_{J})$ are topological spaces for each $J\in \mathcal{I}$ and $(P_{J_1,J_2})$ are continuous for all $J_1\subseteq J_2$ with $J_1,J_2\in\mathcal{I}$, we call 
 $$(X_J,\tau_J,P_{J_1,J_2},J\in \mathcal{I}):=\left((X_J,\tau_J)_{J\in \mathcal{I}},((P_{J_1,J_2}:X_{J_2}\to X_{J_1})_{\overset{J_1\subseteq J_2}{J_1,J_2\in\mathcal{I}}})\right)$$ a \textit{topological inverse system}.
 A \textit{topological inverse limit} of this inverse system 
 is a space $X$ together with continuous mappings $P_J:X\mapsto X_J, J\in \mathcal I$, such that  $P_{J_1,J_2}P_{J_2}=P_{J_1}$ for all $J_1\subseteq J_2$ in $\mathcal{I}$ 
(that is, the mappings are \textit{compatible})
 and the following \textit{universal property} holds:
 Whenever there is a topological space $Y$, such that there are continuous mappings $(\psi_J:Y\to X_J)_{J\in \mathcal I}$ which are compatible, i.e., $P_{J_1,J_2}\psi_{J_2}=\psi_{J_1}$ for all $J_1\subseteq J_2$ in $\mathcal{I}$, then there exists a unique continuous mapping
 \begin{equation}\label{universal property of inverse limit}
      \Psi:Y\to X, 
      \end{equation}
      with the property $P_J\Psi=\psi_J$ for all $J\in \mathcal I$. We have that 
      \begin{equation}\label{D: Projective Limit}
\left\lbrace x=(x_J)_{J\in \mathcal{I}} \in \prod_{J\in \mathcal{I}}X_J:P_{J_1,J_2}(\pi_{J_2}(x))=\pi_{J_1}(x) \text{ for }J_1\subseteq J_2\right\rbrace\subseteq \prod_{J\in \mathcal I}X_J
\end{equation}
equipped with the subspace topology with respect to the product topology is an inverse limit of the topological inverse system, induced by the canonical projections $\pi_{J'} ((x_J)_{J\in\mathcal I})=x_{J'}$. 
Each topological inverse limit is homeomorphic to this space and therefore to every topological inverse limit
(See the proof of Theorem 1.1.1 in \cite{MR2599132}).
We write $\lim_{\leftarrow}X_J\subseteq \prod_{J\in\mathcal I}X_J$ for the inverse limit as a subset of the product space and we equip it 
throughout with the induced subspace topology. 
\begin{Lemma}\label{L: Closedness of the Inverse limit}
Let $(X_{J},\tau_{J},\pi_{J_1,J_2})$ be a topological inverse system (over the poset $\mathcal{I}$) of Hausdorff spaces. Then $\lim_{\leftarrow}X_J$ is a closed subset of $\prod_{J\in\mathcal I}X_J$ with respect to the product topology.
\end{Lemma}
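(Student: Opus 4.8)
The plan is to realize $\lim_{\leftarrow}X_J$ as an intersection of closed subsets of the product space, one for each comparable pair of indices. Concretely, for each pair $J_1\subseteq J_2$ in $\mathcal{I}$ I would introduce the \emph{agreement set}
\[
A_{J_1,J_2}:=\left\{ x\in \prod_{J\in\mathcal{I}}X_J : P_{J_1,J_2}(\pi_{J_2}(x))=\pi_{J_1}(x)\right\},
\]
so that, directly from the description \eqref{D: Projective Limit}, we have $\lim_{\leftarrow}X_J=\bigcap_{J_1\subseteq J_2}A_{J_1,J_2}$. Since an arbitrary intersection of closed sets is closed, it then suffices to prove that each individual $A_{J_1,J_2}$ is closed in $\prod_{J\in\mathcal{I}}X_J$.

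To show closedness of a single $A_{J_1,J_2}$, I would exhibit it as the set on which two continuous maps into $X_{J_1}$ coincide. The canonical projections $\pi_{J_1}$ and $\pi_{J_2}$ are continuous by definition of the product topology, and $P_{J_1,J_2}$ is continuous because we are working with a \emph{topological} inverse system; hence both
\[
f:=P_{J_1,J_2}\circ\pi_{J_2}\qquad\text{and}\qquad g:=\pi_{J_1}
\]
are continuous maps $\prod_{J\in\mathcal{I}}X_J\to X_{J_1}$, and by construction $A_{J_1,J_2}=\{x: f(x)=g(x)\}$.

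The crux is the standard fact that the locus where two continuous maps into a Hausdorff space agree is closed, and this is the only point at which the Hausdorff hypothesis is used. I would make it explicit by considering the continuous map $(f,g):\prod_{J\in\mathcal{I}}X_J\to X_{J_1}\times X_{J_1}$ and observing that $A_{J_1,J_2}=(f,g)^{-1}(\Delta_{J_1})$, where $\Delta_{J_1}=\{(y,y):y\in X_{J_1}\}$ is the diagonal. Because $X_{J_1}$ is Hausdorff, $\Delta_{J_1}$ is closed in $X_{J_1}\times X_{J_1}$ (this is precisely the characterization of the Hausdorff property in terms of the diagonal), and therefore its preimage under the continuous map $(f,g)$ is closed. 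I do not expect any genuine obstacle here: once the agreement sets are set up, everything is formal, and the whole content of the lemma is concentrated in this diagonal argument. Taking the intersection over all pairs $J_1\subseteq J_2$ then yields that $\lim_{\leftarrow}X_J$ is closed in $\prod_{J\in\mathcal{I}}X_J$.
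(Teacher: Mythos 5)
Your proof is correct and complete: realizing $\lim_{\leftarrow}X_J$ as the intersection of the agreement sets $A_{J_1,J_2}$ and showing each is closed as the preimage of the diagonal of the Hausdorff space $X_{J_1}$ under the continuous map $(P_{J_1,J_2}\circ\pi_{J_2},\pi_{J_1})$ is exactly the standard argument. The paper does not write out a proof at all---it simply cites Lemma 1.1.2 of its reference on inverse limits---and the argument given there is the same diagonal argument you present, so your approach matches the intended one.
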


\begin{proof}
See \cite[Lemma 1.1.2]{MR2599132}.
\end{proof}

\begin{Lemma}\label{L: Surjectivity of the induced mapping}
Let $X$ be a compact Hausdorff space and $(X_{J},\tau_{J},\pi_{J_1,J_2})$ be a topological inverse system of compact Hausdorff spaces. Let $\psi_J:X\to X_J,\, J\in\mathcal I$ be a family of compatible surjections and $\Psi$ the induced mapping. Then either $\lim_{\leftarrow}X_J=\emptyset$ or $\Psi(X)$ is dense in $\lim_{\leftarrow}X_J$. 
\end{Lemma}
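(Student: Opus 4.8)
The plan is to dispatch the dichotomy immediately: if $\lim_{\leftarrow}X_J=\emptyset$ there is nothing to prove, so I assume it is nonempty and fix an arbitrary point $x=(x_J)_{J\in\mathcal I}\in\lim_{\leftarrow}X_J$. I would then show that $x$ lies in the closure $\overline{\Psi(X)}$ by producing, inside an arbitrary basic open neighbourhood of $x$, a point of the form $\Psi(z)$ with $z\in X$.

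Recall that $\lim_{\leftarrow}X_J$ carries the subspace topology inherited from the product $\prod_{J\in\mathcal I}X_J$, so a basic open neighbourhood of $x$ is cut out by finitely many coordinates: it has the form $N=\{y\in\lim_{\leftarrow}X_J : y_{J_k}\in U_{J_k},\ k=1,\dots,n\}$ for finitely many indices $J_1,\dots,J_n\in\mathcal I$ and open sets $U_{J_k}\subseteq X_{J_k}$ with $x_{J_k}\in U_{J_k}$. The key structural observation is that $\mathcal I$, the collection of finite subsets of $I$ ordered by inclusion, is directed: the union $J^*:=J_1\cup\dots\cup J_n$ is again finite and dominates every $J_k$. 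I would now resolve the entire neighbourhood at the single index $J^*$.

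Using surjectivity of $\psi_{J^*}\colon X\to X_{J^*}$, I choose $z\in X$ with $\psi_{J^*}(z)=x_{J^*}$. Compatibility of the family $(\psi_J)_{J\in\mathcal I}$ gives $\psi_{J_k}=P_{J_k,J^*}\circ\psi_{J^*}$, while $x\in\lim_{\leftarrow}X_J$ gives $P_{J_k,J^*}(x_{J^*})=x_{J_k}$. Combining these with the defining relation $P_J\Psi=\psi_J$ of the induced map yields $P_{J_k}(\Psi(z))=\psi_{J_k}(z)=P_{J_k,J^*}(\psi_{J^*}(z))=P_{J_k,J^*}(x_{J^*})=x_{J_k}\in U_{J_k}$ for every $k$, so $\Psi(z)\in N\cap\Psi(X)$. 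Since $N$ was an arbitrary basic neighbourhood, $x\in\overline{\Psi(X)}$, and since $x$ was arbitrary this establishes density.

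The step I expect to be the crux is the reduction of a neighbourhood depending on several, possibly incomparable, indices to a single dominating index $J^*$; this is exactly where the directedness of the poset $\mathcal I$ and the compatibility relations are indispensable, as surjectivity of the individual $\psi_J$ alone would not suffice. I would also remark that compactness is not strictly needed for density as stated, but that it sharpens the conclusion to genuine surjectivity: $\Psi(X)$ is then the continuous image of a compact space, hence compact and, since $\lim_{\leftarrow}X_J$ is Hausdorff and closed in the product by Lemma \ref{L: Closedness of the Inverse limit}, closed, so a dense closed subset must coincide with the whole inverse limit.
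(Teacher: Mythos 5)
Your argument is correct. The paper itself gives no proof of this lemma---it simply cites \cite[Corollary 1.1.7]{MR2599132}---so there is nothing internal to compare against; what you have written is the standard argument and it fills in the details the paper delegates to the reference. The three ingredients are all used in the right places: directedness of $\mathcal I$ (the union $J^*$ of finitely many finite subsets of $I$ is again in $\mathcal I$) reduces a basic neighbourhood to a single coordinate, surjectivity of $\psi_{J^*}$ produces the preimage $z$, and the chain $P_{J_k}(\Psi(z))=\psi_{J_k}(z)=P_{J_k,J^*}(\psi_{J^*}(z))=P_{J_k,J^*}(x_{J^*})=x_{J_k}$ combines the defining relation of $\Psi$, compatibility of the $\psi_J$, and the consistency condition defining $\lim_{\leftarrow}X_J$. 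Your closing remark is also accurate and worth retaining: compactness plays no role in the density statement itself, and enters only to upgrade density to surjectivity via ``compact image in a Hausdorff space is closed''---which is precisely how the lemma is deployed in the proof of part \eqref{Continuity of Sklar} of Theorem \ref{L: Compactness of Consistent Copula families}, where the authors establish closedness of $\Phi(\mathcal C(\bar{\mathbb R}^I)\times\prod_{i\in I}\mathcal P(\bar{\mathbb R}))$ separately and then invoke this lemma only for density.
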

\begin{proof}
See \cite[Corollary 1.1.7]{MR2599132}.
\end{proof}


\section{Copulas and Sklar's Theorem }

As they are cumulative distribution functions, copulas in finite dimension have a one-to-one correspondence to probability measures. 
In infinite dimensions we will therefore work with the notion of copula measures as introduced in \cite{Benth2020}. 
\begin{Definition}\label{T: Consistent Copulas}
A copula measure (or simply copula) on $\bar{\mathbb{R}}^I$ is a probability measure $C\in\mathcal P(\bar{\mathbb R}^I)$, such that its marginals $C_i$ are uniformly distributed on $[0,1]$.
 We will denote the space of copula measures on $\bar{\mathbb R}^I$ by $\mathcal C(\bar{\mathbb R}^I)$.
\end{Definition}

 Sklar's theorem as stated below was proved in \cite{Benth2020} by following the arguments for the finite dimensional assertion in \cite{Ruschendorf2009}. Here we give an alternative proof for the infinite dimensional setting using a topological argument as in \cite{Durante2013}.
\begin{Theorem}[Sklar's Theorem]\label{T: Sklar in infinite dimensions}
Let $\mu \in\mathcal P (\bar{
\mathbb R}^I)$ be a probability measure with marginal one-dimensional distributions $\mu_i, i\in I$.
There exists a copula measure $C$, such that for each $J\in \mathcal I$, we have
\begin{equation}\label{Sklar Property}
F_{C_J}\left(\left(F_{\mu_{j}}(x_{j})\right)_{j\in J}\right)=F_{\mu_J}\left((x_{j})_{j\in J}\right)
\end{equation} 
for all $(x_{j})_{j\in J}\in \bar{\mathbb{R}}^{ J}$. Moreover, $C$ is unique if $F_{\mu_{i}}$ is continuous for each $i\in I$.
Vice versa, let $C$ be a copula measure on $\bar{\mathbb{R}}^I$ and let $(\mu_i)_{i\in I}$ be a collection of (one-dimensional) Borel probability measures over $\bar{\mathbb R}$. 
Then there exists a unique probability measure $\mu\in\mathcal P(\bar{\mathbb R}^I)$, such that 
\eqref{Sklar Property} holds. 
\end{Theorem}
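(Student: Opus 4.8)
The plan is to reduce the statement to the classical finite-dimensional Sklar theorem by means of the inverse-limit machinery of Section 2: the converse (from a copula to a joint law) will be handled by an explicit quantile construction, while the main direction (existence of a copula for a given $\mu$) will rest on a compactness argument. First I would set up the relevant inverse system. For each finite $J\in\mathcal{I}$, equip $\mathcal{P}(\bar{\mathbb{R}}^J)$ with the topology of weak convergence; since $\bar{\mathbb{R}}^J$ is compact metrizable, $\mathcal{P}(\bar{\mathbb{R}}^J)$ is compact Hausdorff, and $\mathcal{C}(\bar{\mathbb{R}}^J)$, being cut out by the weakly closed requirement that each univariate marginal be uniform on $[0,1]$, is a closed and hence compact subspace. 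The marginalization maps $P_{J_1,J_2}$ (pushforward under the coordinate projection $\bar{\mathbb{R}}^{J_2}\to\bar{\mathbb{R}}^{J_1}$) are weakly continuous and satisfy axioms (i)--(ii), so $(\mathcal{C}(\bar{\mathbb{R}}^J),P_{J_1,J_2},\mathcal{I})$ is a topological inverse system of compact Hausdorff spaces. The marginalizations $P_J:\mathcal{C}(\bar{\mathbb{R}}^I)\to\mathcal{C}(\bar{\mathbb{R}}^J)$ from the compact Hausdorff space $\mathcal{C}(\bar{\mathbb{R}}^I)$ (closed in the compact $\mathcal{P}(\bar{\mathbb{R}}^I)$, as $\bar{\mathbb{R}}^I$ is compact by Tychonoff) are compatible surjections — any finite copula extends to $\bar{\mathbb{R}}^I$ by its independent product with uniform laws on the remaining coordinates — so by Lemma \ref{L: Surjectivity of the induced mapping} the induced map $\Psi$ has dense image; as its domain is compact and its image therefore closed, $\Psi$ is onto, and it is injective because a measure on the product $\sigma$-algebra is determined by its finite-dimensional marginals. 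This yields the homeomorphism $\mathcal{C}(\bar{\mathbb{R}}^I)\cong\lim_{\leftarrow}\mathcal{C}(\bar{\mathbb{R}}^J)$, so that a compatible family of finite-dimensional copulas corresponds, via Kolmogorov's extension theorem on the Polish space $\bar{\mathbb{R}}^I$, to a unique copula measure.

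For the existence part, fix $\mu$ and, for each $J\in\mathcal{I}$, let $S_J\subseteq\mathcal{C}(\bar{\mathbb{R}}^J)$ be the set of finite-dimensional copulas satisfying \eqref{Sklar Property} for $\mu_J$. The classical finite-dimensional Sklar theorem gives $S_J\neq\emptyset$. Each $S_J$ is weakly closed: the limit of copulas is again a copula, whose atomless uniform marginals make every lower rectangle $\prod_{j\in J}[-\infty,u_j]$ a continuity set, so weak convergence forces pointwise convergence of the distribution functions and the defining equalities \eqref{Sklar Property} pass to the limit; thus $S_J$ is compact. Sending the surplus coordinates to $+\infty$ in \eqref{Sklar Property} shows $P_{J_1,J_2}(S_{J_2})\subseteq S_{J_1}$, so $(S_J,P_{J_1,J_2},\mathcal{I})$ is a subsystem of nonempty compact Hausdorff spaces. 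The crucial point is then that the inverse limit of such a system over the directed poset $\mathcal{I}$ is nonempty, a standard finite-intersection-property consequence of compactness in the same circle of ideas as Lemmas \ref{L: Closedness of the Inverse limit}--\ref{L: Surjectivity of the induced mapping}. Any element of $\lim_{\leftarrow}S_J\subseteq\lim_{\leftarrow}\mathcal{C}(\bar{\mathbb{R}}^J)\cong\mathcal{C}(\bar{\mathbb{R}}^I)$ is then a copula $C$ whose $J$-marginal lies in $S_J$ for every $J$, i.e. one satisfying \eqref{Sklar Property}. For uniqueness, if each $F_{\mu_i}$ is continuous then its range is dense in $[0,1]$, so \eqref{Sklar Property} together with the continuity of $F_{C_J}$ determines $F_{C_J}$ on all of $[0,1]^J$; hence each $S_J$ is a singleton and $\lim_{\leftarrow}S_J$ has exactly one point, giving a unique $C$.

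For the converse, given $C$ and the one-dimensional laws $(\mu_i)_{i\in I}$, I would define the coordinatewise quantile map $Q:\bar{\mathbb{R}}^I\to\bar{\mathbb{R}}^I$ by $Q((u_i)_{i\in I})=(F_{\mu_i}^{[-1]}(u_i))_{i\in I}$ and set $\mu:=Q_*C$. Using the Galois equivalence $F_{\mu_i}^{[-1]}(u)\leq x\iff u\leq F_{\mu_i}(x)$ coordinatewise, a direct computation gives $F_{\mu_J}((x_j)_{j\in J})=F_{C_J}((F_{\mu_j}(x_j))_{j\in J})$ for every finite $J$, which is precisely \eqref{Sklar Property}; uniqueness is immediate, since this identity already prescribes all finite-dimensional distribution functions $F_{\mu_J}$, and these determine $\mu$.

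I expect the main obstacle to be the nonemptiness of $\lim_{\leftarrow}S_J$: this is exactly where the compactness of the space of copulas does the essential work, replacing the delicate consistency bookkeeping that makes a direct subcopula-extension argument awkward in infinite dimensions. The accompanying technical care lies in verifying that each $S_J$ is weakly closed, which hinges on copula distribution functions being genuinely continuous so that the constraints \eqref{Sklar Property} survive passage to weak limits.
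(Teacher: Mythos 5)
Your proposal is correct in substance, but the mechanism you use for the existence direction differs from the paper's. The paper never fixes $\mu$ and intersects solution sets; instead it defines the single global map $\Phi(C,(\mu_i)_{i\in I})=((F_{\mu_i}^{[-1]})_{i\in I})_*C$ on the compact space $\mathcal C(\bar{\mathbb R}^I)\times\prod_{i\in I}\mathcal P(\bar{\mathbb R})$, realizes it as the map induced (via the universal property) by the compatible, continuous, surjective finite-level maps $\tilde\phi_J\circ\pi_J$ (surjectivity being finite-dimensional Sklar, continuity being a cited result on the weak continuity of the quantile pushforward), and then concludes surjectivity of $\Phi$ from Lemma \ref{L: Surjectivity of the induced mapping} (dense image) combined with compactness of the domain (closed image). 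You instead form, for fixed $\mu$, the sets $S_J$ of finite-dimensional copulas solving \eqref{Sklar Property}, check they are nonempty, compact and form a subsystem, and invoke nonemptiness of the inverse limit of nonempty compact Hausdorff spaces over the directed poset $\mathcal I$. Both arguments live in the same inverse-limit framework and both bottom out in finite-dimensional Sklar plus compactness of finite-dimensional copulas; yours is more economical for the bare existence statement and makes the role of compactness very transparent, but it requires the extra verifications that each $S_J$ is weakly closed (which you handle correctly via the atomless marginals making lower rectangles continuity sets) and it appeals to a nonemptiness lemma the paper does not state (though it is standard and in the same source as Lemmas \ref{L: Closedness of the Inverse limit}--\ref{L: Surjectivity of the induced mapping}). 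The paper's route buys more: statement \eqref{Continuity of Sklar} of Theorem \ref{L: Compactness of Consistent Copula families}, namely the joint continuity of the Sklar operation $(C,(\mu_i)_{i\in I})\mapsto\mu$, which is of independent interest and is not recovered by your fiberwise argument. Your converse direction (quantile pushforward plus the Galois equivalence $F^{[-1]}_{\mu_i}(u)\le x\iff u\le F_{\mu_i}(x)$) coincides with the paper's. One small imprecision: you justify compactness of $\mathcal P(\bar{\mathbb R}^I)$ by compactness of $\bar{\mathbb R}^I$ via Tychonoff, but in the topology of convergence of finite-dimensional distributions the correct route is Tychonoff applied to $\prod_{J\in\mathcal I}\mathcal P(\bar{\mathbb R}^J)$ together with closedness of the inverse limit (Lemma \ref{L: Closedness of the Inverse limit}); this is how the paper proves compactness of $\mathcal C(\bar{\mathbb R}^I)$, and the fact you need is true, so this is a presentational slip rather than a gap.
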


\section{Topological Properties of Copulas and a Proof of Sklar's Theorem }

  The collection $(\mathcal P(\bar{\mathbb R}^J), J\in \mathcal I)$, where each $\mathcal P(\bar{\mathbb R}^J)$ is considered as a topological space  with the topology of weak convergence, is a topological inverse system with the projections
$\pi_{J_1,J_2}(\mu_{J_2})=(\mu_{J_2})_{J_1}$ for $\mu_{J_2}\in \mathcal P(\bar{\mathbb R}^{J_2})$ and $J_1,J_2\in \mathcal I$, $J_1\subseteq J_2$. Moreover, observe that each $\mathcal P(\bar{\mathbb R}^J)$ is a Hausdorff space, since it is metrizable by the Prohorov metric (c.f. \cite[Theorem 4.2.5]{Schweitzer2006}).
The space $\lim_{\leftarrow}\mathcal P(\bar{\mathbb R}^J)\subset \prod_{J\in\mathcal I}\mathcal P(\bar{\mathbb R}^J)$ of consistent families of probability measures is a topological inverse limit, equipped with the corresponding inverse limit topology.
The space of probability measures on $\otimes_{i\in I}\mathcal B(\mathbb R)$ has via its finite-dimensional distributions a one-to-one correspondence with this family of consistent finite-dimensional distributions, and hence 
there is a natural bijection between $\lim_{\leftarrow}\mathcal P(\bar{\mathbb R}^J)$ and $\mathcal P(\bar{\mathbb R}^I)$. 

We equip the space $\mathcal P(\bar{\mathbb R}^I)$ with the topology of \textit{weak convergence of the finite dimensional distributions},  
 which we define as follows:
 \begin{Definition}
 The topology of convergence of the finite dimensional distributions on $\mathcal P(\bar{\mathbb R}^I)$ is defined as the topology such that  $\mathcal P(\bar{\mathbb R}^I)\cong \lim_{\leftarrow}\mathcal P(\bar{\mathbb R}^J)$.
 \end{Definition}
 $\mathcal P(\bar{\mathbb R}^I)$ with this topology is by definition a topological inverse limit.
Define also $\lim_{\leftarrow}\mathcal C(\bar{\mathbb R}^J):=\lim_{\leftarrow}\mathcal P (\bar{\mathbb{R}}^J)\cap \prod_{J\in\mathcal{I}}\mathcal C (\bar{\mathbb{R}}^J)$. Certainly, we have \begin{equation}
    \mathcal C\left(\bar{\mathbb{R}}^I\right)\cong\lim_{\leftarrow}\mathcal C\left(\bar{\mathbb R}^J\right)
\end{equation}
with the corresponding topologies. 

The following result contains among other things 
the topological proof of Sklar's theorem \ref{T: Sklar in infinite dimensions}.
\begin{Theorem}\label{L: Compactness of Consistent Copula families}The following statements hold.
\begin{enumerate}
\item\label{Hausdorffness} $\mathcal{P}(\bar{\mathbb R}^I)$ with the topology of weak convergence of the finite dimensional distributions is a Hausdorff space
    \item\label{Compactness of the copulas} The space of consistent copulas $\mathcal C(\bar{\mathbb{R}}^I)$ is compact with respect to the topology of convergence of finite dimensional distributions. 
    \item\label{Second part of Sklar}
For a copula measure  $C$  on $\bar{\mathbb{R}}^I$ and (one-dimensional) Borel probability measures $(\mu_i)_{i\in I}$ over $\bar{\mathbb R}$ the push-forward measure
\begin{equation}\label{Explicit form of seond part Sklar}
\mu:=((F_{\mu_i}^{[-1]})_{i\in I})_*C
\end{equation}
satisfies 
\eqref{Sklar Property}. 
    \item\label{Continuity of Sklar} 
    If we equip $\mathcal C(\bar{\mathbb{R}}^I)\times \prod_{i\in I}\mathcal P (\bar{\mathbb R})$ with the product topology of weak convergence on each $\mathcal P (\bar{\mathbb R})$ and the topology of convergence of the finite dimensional distributions on $\mathcal C(\bar{\mathbb{R}}^I)$ and $\mathcal P(\bar{\mathbb R}^I)$, then the
   mapping $\Phi:\mathcal{C} (\bar{\mathbb{R}}^{\mathcal{I}})\times \prod_{i\in I} \mathcal{P}(\bar{\mathbb R})\to \mathcal{P}(\bar{\mathbb{R}}^I)$ given by  $$\Phi(C,(\mu_i)_{i\in I}):= ((F_{\mu_i}^{[-1]})_{i\in I})_*C$$
 is continuous and surjective. In particular, Sklar's theorem holds.
\end{enumerate}
\end{Theorem}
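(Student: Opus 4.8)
The plan is to establish the four assertions essentially in the listed order, since the last two rest on the first two. For \ref{Hausdorffness} I would argue purely formally: by construction $\mathcal{P}(\bar{\mathbb R}^I)\cong\lim_{\leftarrow}\mathcal{P}(\bar{\mathbb R}^J)$, each factor $\mathcal{P}(\bar{\mathbb R}^J)$ is metrizable by the Prohorov metric and hence Hausdorff, and by Lemma \ref{L: Closedness of the Inverse limit} the inverse limit sits as a subspace of the product $\prod_{J\in\mathcal I}\mathcal{P}(\bar{\mathbb R}^J)$; products and subspaces of Hausdorff spaces are Hausdorff. For \ref{Compactness of the copulas} the key point is that each $\mathcal{C}(\bar{\mathbb R}^J)$ is a \emph{closed} subset of the compact space $\mathcal{P}(\bar{\mathbb R}^J)$ (compact because $\bar{\mathbb R}^J$ is compact; closed because weak convergence is preserved under the marginal maps, so the constraint that all one-dimensional marginals be uniform on $[0,1]$ is a closed condition). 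Thus each $\mathcal{C}(\bar{\mathbb R}^J)$ is compact Hausdorff, and these form a topological inverse subsystem; by Lemma \ref{L: Closedness of the Inverse limit} its inverse limit is closed in the product $\prod_{J\in\mathcal I}\mathcal{C}(\bar{\mathbb R}^J)$, which is compact by Tychonoff, so $\mathcal{C}(\bar{\mathbb R}^I)\cong\lim_{\leftarrow}\mathcal{C}(\bar{\mathbb R}^J)$ is compact.

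For \ref{Second part of Sklar} I would compute directly. The engine is the Galois inequality $F_{\mu_i}^{[-1]}(u)\le x\iff u\le F_{\mu_i}(x)$ for the quantile function \eqref{Inverse Transform}. Writing $T=(F_{\mu_i}^{[-1]})_{i\in I}$ and fixing $J\in\mathcal I$ and $(x_j)_{j\in J}$, the distribution function of the pushforward $\mu=T_*C$ satisfies $\mu_J(\prod_{j\in J}(-\infty,x_j])=C(\{u:F_{\mu_j}^{[-1]}(u_j)\le x_j\ \forall j\in J\})=C(\{u:u_j\le F_{\mu_j}(x_j)\ \forall j\in J\})=F_{C_J}((F_{\mu_j}(x_j))_{j\in J})$, which is exactly \eqref{Sklar Property}; along the way one checks that each marginal of $T_*C$ is indeed $\mu_j$, since $(F_{\mu_j}^{[-1]})_*\mathrm{Unif}[0,1]=\mu_j$.

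The substantive part is \ref{Continuity of Sklar}. Because the target carries the inverse-limit topology, continuity of $\Phi$ reduces to continuity of each finite-dimensional coordinate map $(C,(\mu_i)_{i\in I})\mapsto((F_{\mu_j}^{[-1]})_{j\in J})_*C_J$, and since all spaces in sight are metrizable I would argue with sequences. Given $C^{(n)}\to C$ in $\mathcal{C}(\bar{\mathbb R}^J)$ and $\mu_j^{(n)}\to\mu_j$ weakly, I would combine two ingredients: first, the quantile transforms converge in the joint sense that $u^{(n)}\to u$ with $u$ a continuity point of $T$ forces $T^{(n)}(u^{(n)})\to T(u)$, a standard consequence of weak convergence for generalized inverses; second --- and this is where the copula hypothesis enters crucially --- the limiting transform $T$ is continuous $C$-almost everywhere, because its discontinuities are contained in a countable union of hyperplanes $\{u_j=c\}$, each of which is $C$-null precisely because the marginal $C_j$ is uniform, hence atomless. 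The extended continuous mapping theorem then yields $T^{(n)}_*C^{(n)}\to T_*C$ weakly. I expect this step --- controlling a transform that is merely almost-everywhere continuous, evaluated along a varying sequence of copulas --- to be the main obstacle, the atomlessness of the copula marginals being exactly what neutralizes the discontinuities of the quantile maps.

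Finally I would deduce surjectivity and Sklar's theorem \ref{T: Sklar in infinite dimensions} from \ref{Compactness of the copulas} and \ref{Continuity of Sklar} via Lemma \ref{L: Surjectivity of the induced mapping}. Fix $\mu$ with marginals $\mu_i$ and let $\mathcal{P}_\mu(\bar{\mathbb R}^J)$ denote the measures in $\mathcal{P}(\bar{\mathbb R}^J)$ with those prescribed marginals; these are compact Hausdorff and form an inverse system whose limit is (homeomorphic to) the set of measures on $\bar{\mathbb R}^I$ with marginals $\mu_i$, a set containing $\mu$ and hence nonempty. For each finite $J$ the finite-dimensional Sklar theorem makes $C_J\mapsto((F_{\mu_j}^{[-1]})_{j\in J})_*C_J$ a surjection onto $\mathcal{P}_\mu(\bar{\mathbb R}^J)$; precomposing with the projection $\mathcal{C}(\bar{\mathbb R}^I)\to\mathcal{C}(\bar{\mathbb R}^J)$ (surjective, since any finite-dimensional copula extends by adjoining independent uniform coordinates) gives compatible surjections from the compact Hausdorff space $\mathcal{C}(\bar{\mathbb R}^I)$ whose induced map is exactly $\Phi(\,\cdot\,,(\mu_i)_{i\in I})$. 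By Lemma \ref{L: Surjectivity of the induced mapping} its image is dense, and being the continuous image of a compact space inside a Hausdorff space it is also closed, hence all of $\mathcal{P}_\mu(\bar{\mathbb R}^I)$; thus $\mu$ lies in the image and $\Phi$ is surjective. To conclude Sklar: existence of a copula for given $\mu$ is this surjectivity; existence of $\mu$ for given $C$ and marginals together with \eqref{Sklar Property} is \ref{Second part of Sklar}; uniqueness of $\mu$ is automatic because \eqref{Sklar Property} pins down every finite-dimensional distribution function; and uniqueness of $C$ when each $F_{\mu_i}$ is continuous follows because continuity forces $F_{\mu_j}\circ F_{\mu_j}^{[-1]}=\mathrm{id}$ on $(0,1)$, so \eqref{Sklar Property} then determines $F_{C_J}$ and hence $C$.
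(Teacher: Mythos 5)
Your proposal is correct and follows the same overall architecture as the paper's proof --- inverse limits, Tychonoff, closedness of the limit via Lemma \ref{L: Closedness of the Inverse limit}, and ``dense image plus compact image equals surjective'' via Lemma \ref{L: Surjectivity of the induced mapping} --- but it diverges in three places where the paper outsources the work to the literature. For \eqref{Compactness of the copulas} you prove compactness of each $\mathcal C(\bar{\mathbb R}^J)$ directly (a closed subset, cut out by the continuous marginal maps, of the compact space $\mathcal P(\bar{\mathbb R}^J)$), where the paper cites \cite[Thm.~3.3]{MR2847456}. For the continuity in \eqref{Continuity of Sklar} the paper simply invokes \cite[Thm.~2]{MR2065562} for the finite-dimensional maps $\tilde\phi_J$, whereas you sketch a proof via the extended continuous mapping theorem, correctly identifying that atomlessness of the copula marginals is what neutralizes the discontinuity hyperplanes of the quantile transform; this is essentially a proof of the cited result, and it is the one genuinely technical step. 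One imprecision there: for uncountable $I$ the domain $\mathcal C(\bar{\mathbb R}^I)\times\prod_{i\in I}\mathcal P(\bar{\mathbb R})$ is \emph{not} metrizable, so your sequential argument is legitimate only because the coordinate map factors as $\tilde\phi_J\circ\pi_J$ through the metrizable finite-dimensional factor --- which is what you actually use, but should be stated. Finally, your surjectivity argument runs fiberwise over a fixed family of marginals, applying Lemma \ref{L: Surjectivity of the induced mapping} to compatible surjections $\mathcal C(\bar{\mathbb R}^I)\to\mathcal P_\mu(\bar{\mathbb R}^J)$ (using that restriction of copulas to finitely many coordinates is onto, by independent extension), while the paper applies the lemma globally to $\phi_J$ with target $\mathcal P(\bar{\mathbb R}^J)$ and the marginals varying; both are valid, and your version has the mild advantage of producing, for a given $\mu$, a copula satisfying \eqref{Sklar Property} with the marginals of $\mu$ itself, a point the paper leaves implicit. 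Your direct Galois-inequality computation for \eqref{Second part of Sklar} and your uniqueness arguments match what the paper defers to \cite{Benth2020} and to finite-dimensional Sklar.
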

\begin{proof}

\eqref{Hausdorffness} Since products of Hausdorff spaces are Hausdorff and $\mathcal{P}(\bar{\mathbb R}^I)$ is homeomorphic to a subset of a product of Hausdorff spaces, it is Hausdorff.

\eqref{Compactness of the copulas} We know by \cite[Thm. 3.3]{MR2847456} that every $\mathcal C(\bar{\mathbb R}^J)$ is compact with respect to the topology of weak convergence on $\mathcal P(\bar{\mathbb{R}}^J)$.
 Tychonoff's Theorem guarantees also that $\prod_{J\in\mathcal{I}}\mathcal C(\bar{\mathbb R}^J)$ is compact with respect to the product topology on $\prod_{J\in\mathcal{I}}\mathcal P(\bar{\mathbb{R}}^J)$. Therefore, as $\lim_{\leftarrow}\mathcal P (\bar{\mathbb{R}}^J)$ is closed by Lemma \ref{L: Closedness of the Inverse limit}, we obtain that $\mathcal C (\bar{\mathbb{R}}^I)$ is compact, since it is homeomorphic to an intersection of a closed  and a compact set in the product topology. 

\eqref{Second part of Sklar}
This corresponds to the second part of Sklar's theorem and the proof can be conducted analogously to the one in  \cite{Benth2020}. Therefore, it is enough to see that $$\left(\left[0,F_{\mu_{j}}(x_j)\right]\right)_{j\in J}\setminus \left(\left(F_{\mu_j}^{[-1]}\right)^{-1}(-\infty,x_1]\right)_{j\in J}$$
is a $C_J$-nullset for all $(x_j)_{j\in J}\in\bar{\mathbb R}^J$, $J\in \mathcal I$, since then we immediately obtain
\begin{align*}
  C_J\left(\left(\left(F_{\mu_j}^{[-1]}\right)^{-1}(-\infty,x_1]\right)_{j\in J}\right)
  =  C_J\left(\left([0,F_{\mu_j}(x_j)]\right)_{j\in J}\right) =F_{C_J}\left(F_{\mu_j}\left(\left(x_j\right)\right)_{j\in J}\right).
\end{align*}

\eqref{Continuity of Sklar}
Define $\phi_J:\mathcal C(\bar{\mathbb R}^I)\times \prod_{i\in I}\mathcal P(\bar{\mathbb R})\to \mathcal P(\bar{\mathbb R}^J)$ by $$\phi_J(C,(\mu_i)_{i\in I}):=\Phi(C,(\mu_i)_{i\in I})_J,$$
which is well defined by \eqref{Second part of Sklar}. Since the finite-dimensional distributions of a law are consistent, $(\phi_J,J\in \mathcal I)$ forms a compatible family.
Define analogously for $J\in \mathcal I$ also $\tilde{\phi}_J:\mathcal C(\bar{\mathbb R}^J)\times \prod_{j\in J}\mathcal P(\bar{\mathbb R})\to \mathcal P(\bar{\mathbb R}^J)$  by $$\tilde{\phi}_J(C_J,(\mu_j)_{j\in J})= (F_{\mu_j}^{[-1]})_{j\in J})_*C_J.$$ 
This is by Sklar's theorem in finite-dimensions surjective and by \cite[Thm. 2]{MR2065562} also continuous.
Hence $\phi_J=\tilde{\phi}_J\pi_J$ is continuous and surjective, since both, $\tilde{\phi}_J$ and $\pi_J$ are. $\Phi$ must be the uniquely induced continuous mapping by the family $(\phi_J,J\in \mathcal I)$ by the universality property of the inverse limit.
Moreover, since by \cite[Corollary 4.2.6]{Schweitzer2006} $\mathcal P (\bar{\mathbb R})$ is compact and by \eqref{Compactness of the copulas} also $\mathcal C(\bar{\mathbb R}^I)$ is compact, we have that $\mathcal C(\bar{\mathbb R}^I)\times \prod_{i\in I}\mathcal P(\bar{\mathbb R})$ is compact by Tychonoff's theorem. The continuity of $\Phi$ implies therefore that $\Phi(\mathcal C(\bar{\mathbb R}^I)\times \prod_{i\in I}\mathcal P(\bar{\mathbb R}))$ is compact, hence closed.
Since moreover Lemma \ref{L: Surjectivity of the induced mapping} implies that $\Phi(\mathcal C(\bar{\mathbb R}^I)\times \prod_{i\in I}\mathcal P(\bar{\mathbb R}))$ is dense, we obtain that $\Phi$ is surjective and therefore also the first part of Sklar's theorem holds.
The uniqueness of the copulas in the case of continuous marginals follows immediately by Sklar's theorem in finite dimensions via the uniqueness of the finite dimensional distribution of the corresponding copula measure.
\end{proof}

Observe that since $\mathcal P (\bar{\mathbb R}^J)$ is a locally convex Hausdorff space with respect to the topology of weak convergence for each $J\in \mathcal I$, we obtain that also the inverse limit $\mathcal P(\bar{\mathbb R}^I)$ is locally convex, as it is isomorphic to a subset of the product $\prod_{J\in \mathcal I}\mathcal{P}(\mathbb R^J)$ of locally convex Hausdorff spaces.
Hence, as mentioned for instance in  \cite[p.30]{Sempi2015}, since $\mathcal C (\mathbb R^I)$ is convex, we have that it is the closure of its extremal points by the Krein-Milman theorem. As mentioned in \cite{Benes1991} this implies that 
$$\sup_{C\in \mathcal C(\mathbb R^I)}g(C)=\sup_{C\in  ext(\mathcal C(\mathbb R^I))}g(C)$$
where $ext(C(\mathbb R^I))$ denotes the set of extremal points of $C(\mathbb R^I)$ and $g:\mathcal C (\mathbb R^I)\mapsto \mathbb R$ is a convex function.

\subsection*{Acknowledgements}
This research was funded within the project STORM: Stochastics for Time-Space Risk Models, from the Research Council of Norway (RCN). Project number: 274410.

\bibliographystyle{amsplain}
\addcontentsline{toc}{section}{\refname}\bibliography{Bibliography}

\end{document}